\newcommand{\RR}{\mathbb R}
\newcommand{\NN}{\mathbb N}
\newcommand{\T}{\mathcal T}
\newcommand{\R}{\mathcal R}
\newtheorem{theorem}{Theorem}[section]
\newtheorem{definition}[theorem]{Definition}
\newtheorem{lemma}[theorem]{Lemma}
\newtheorem{remark}[theorem]{Remark}
\newtheorem{example}[theorem]{Example}
\newtheorem{assumption}{Assumption}
\newcommand\blfootnote[1]{%
	\begingroup
	\renewcommand\thefootnote{}\footnotetext{#1}%
	\addtocounter{footnote}{-1}%
	\endgroup
}
\begin{document}

\title{Internal 
	observability of the wave equation in tiled domains}
%

\author{Anna Chiara Lai
\\
              Dipartimento di Scienze di Base
              e Applicate per l'Ingegneria,\\
              Sapienza Universit\`{a} di  Roma\\
              via A. Scarpa, 16,\\
              00161  Roma, Italy          \\ \texttt{anna.lai@sbai.uniroma1.it}            }
%

\maketitle
\blfootnote{This research is supported by Sapienza Universit\`{a} di Roma, Dipartimento di Scienze di Base e Applicate per l'ingegneria, Assegno di Ricerca n. 7/2016.}

\begin{abstract}
 We investigate the internal observability of the wave equation with Dirichlet 
boundary conditions in tilings. The paper includes a general result 
relating internal observability problems in general domains to their tiles, and a discussion of the case in which the domain is the $30$-$60$-$90$ triangle. 
\end{abstract}
 \textbf{Keywords: }{Internal observability,  wave equation, Fourier series, tilings.}\\
\\
\textbf{Mathematics subject classification: }{42B05, 52C20.}

\section{Introduction}\label{s1}
The aim of the present paper is to investigate internal observability properties of vibrating repetitive structures.
Motivated by applications of hexagonal and triangular tilings (and related subtilings) to engineering, the particular
case of the half to the equilateral triangle is treated in detail.

By a repetitive structure, or tessellation, is meant a structure obtained by the assemblation of identical substructures, 
or tiles. For instance, two-dimensional lattices and the honeycomb lattice are examples of tessellation of $\RR^2$; 
while the regular hexagon (i.e., the tile of the honeycomb lattice) and the rectangle with aspect ratio equal to $\sqrt{3}$ 
are bounded domains that can be both tiled with $30$-$60$-$90$ triangles, see Figure \ref{fig1}.
The interest in repetitive systems of vibrating membranes is motivated by applications in mechanical, civil and aerospace
engineering \cite{multisymmetric,cooling}. Modular structures have indeed the double advantage of a cost-effective
manufacturing and construction (due to the repetitivity of the process) as well as a computationally cost-effective design. 
In particular, structural eigenproblems (e.g., vibrations and buckling) for repetitive structures in general involve
a lower number of degrees of freedoms and, consequently, a less computationally demanding numerical solution 
\cite{forcedvibration}. 
Tilings involving regular triangles and hexagons (known as triangular lattice and honeycomb lattice, respectively)  
find  countless applications in engineering, as well \cite{elegia}. 
For instance, the use of such structures in architectural engineering  is motivated by their mechanical properties, 
including resistance to external load and energy absorption, see for instance \cite{prop1,prop2} and, for a comprehensive dissertation on the topic,
the book \cite{cellular}.
Finally, we mention that honeycomb lattice plays a crucial role in nanosciences and, in particular, in graphene technology \cite{graphene}. 

\begin{figure}[t!]
	\centering
	\begin{subfigure}[t]{0.48\textwidth}
		\centering
		\includegraphics[scale=0.5]{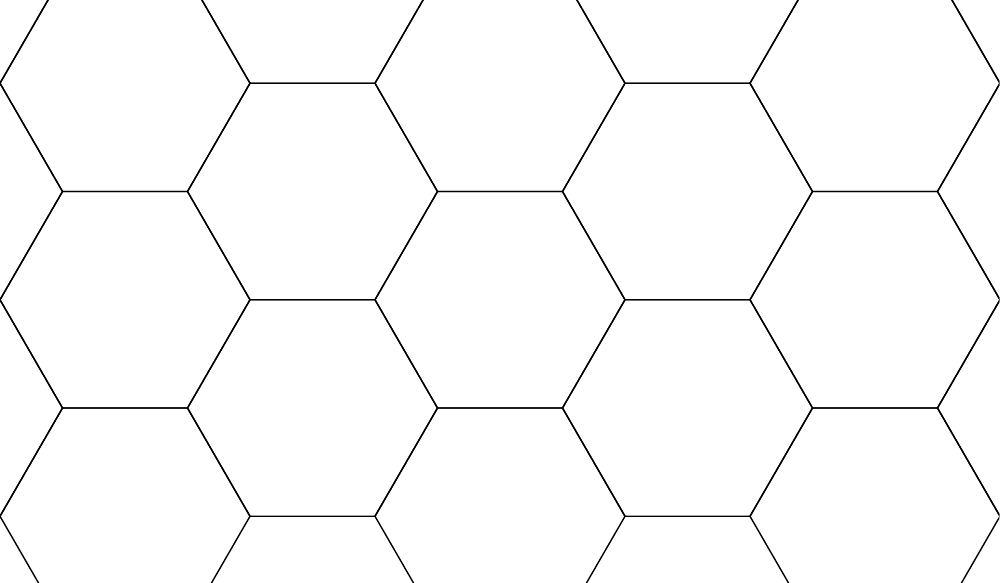}
		\caption{Honeycomb tiling}
	\end{subfigure}%
		\begin{subfigure}[t]{0.48\textwidth}
		\centering
		\includegraphics[scale=0.5]{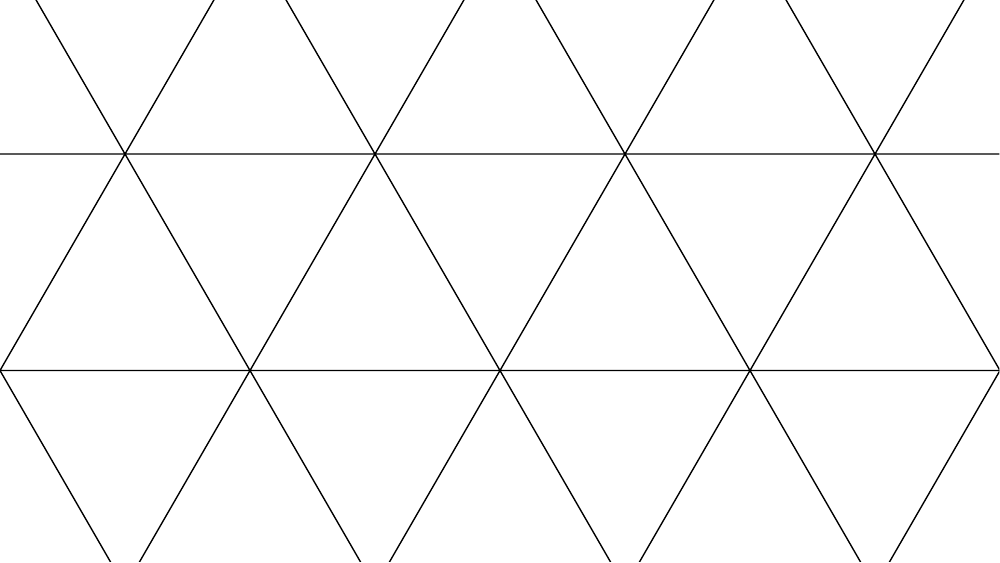}
		\caption{Triangular tiling}
	\end{subfigure}\\
	\hskip-1cm\begin{subfigure}[t]{0.5\textwidth}
\hskip1.55cm
		\includegraphics[scale=0.2]{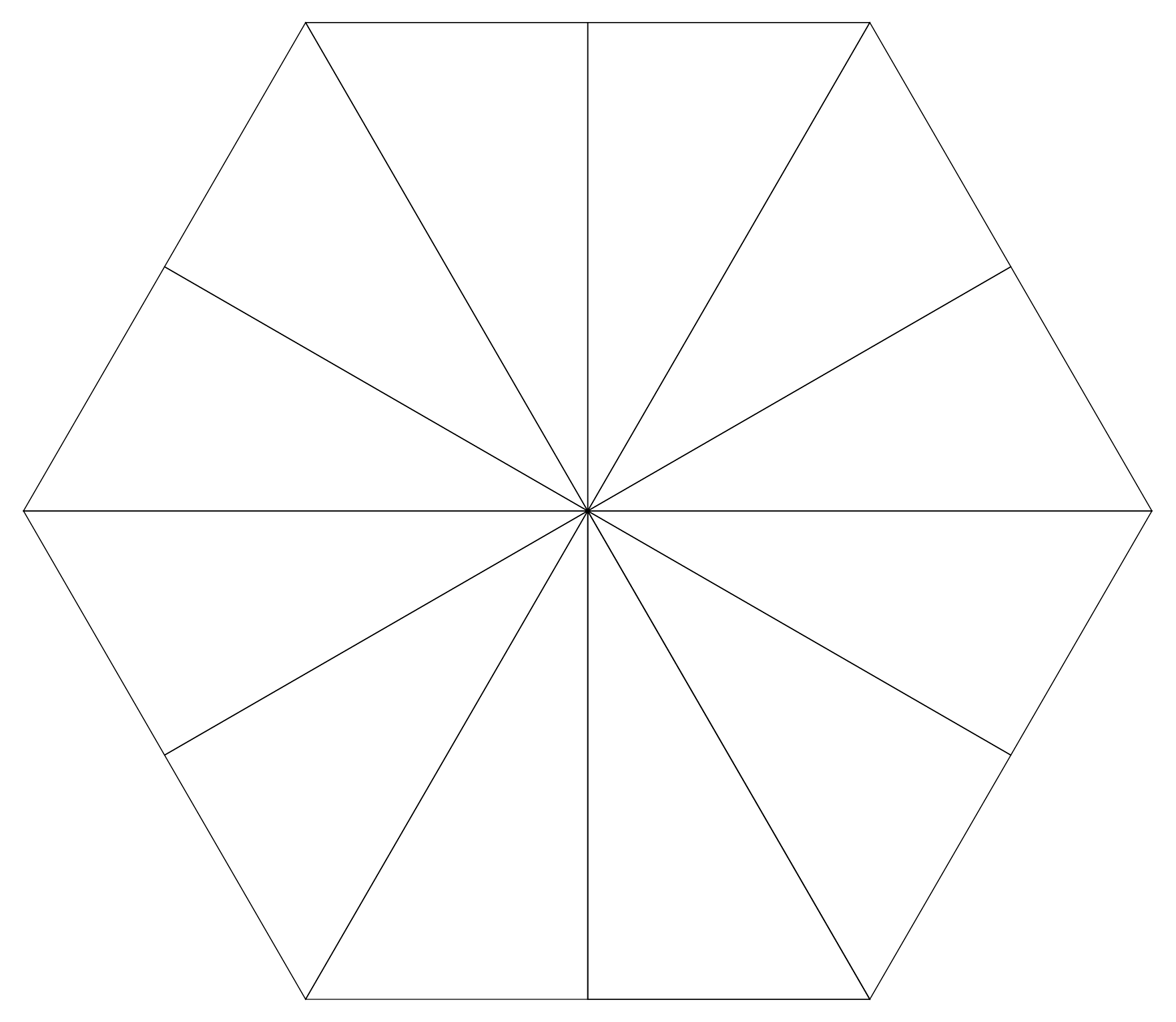}
		\caption{Tiling of the regular hexagon}
	\end{subfigure}%
	\hskip0.5cm
	\begin{subfigure}[t]{0.4\textwidth}
		\centering
		\includegraphics[scale=0.5]{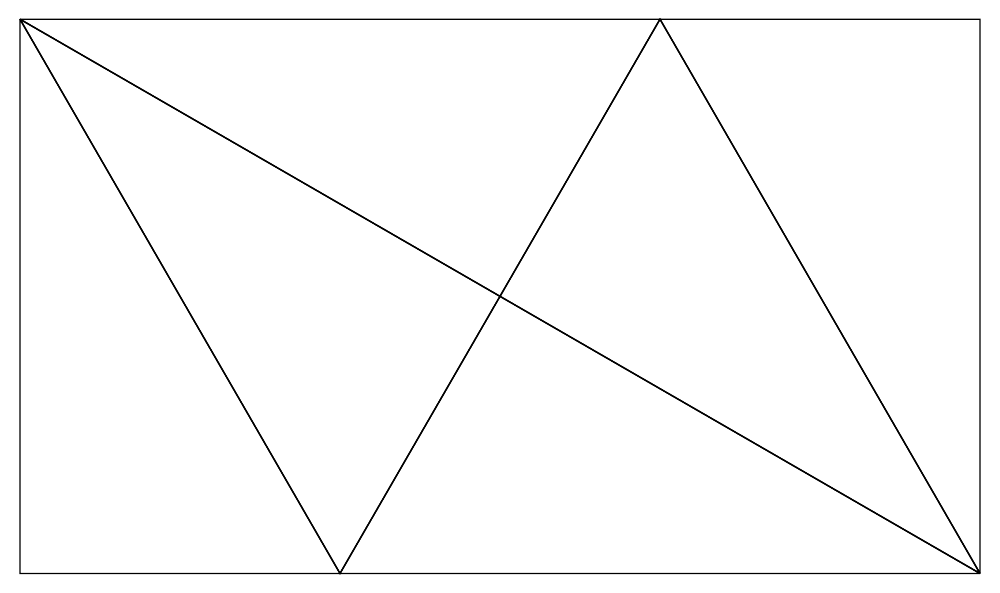}
		\caption{Tiling of the rectangle with aspect ratio equal to $\sqrt{3}$}
	\end{subfigure}
	\caption{Some tilings related to the $30$-$60$-$90$ triangle.\label{fig1}}
	
\end{figure}

As mentioned above, we are interested in the internal observability of the wave equation, 
that is the problem of reconstructing initial data from the observation of the evolution of the system in a subregion of the domain.
Using folding and tessellation techniques, in the spirit of \cite{triangle} and \cite{KS84}, 
we provide a general class of tilings, called \emph{admissible tilings}, for which some internal
observability properties of tiled domains extend to their tiles and -- under some symmetry assumptions on initial data -- vice versa.  
In particular, we show how to bridge the well-established theory
concerning rectangular domains  \cite{Har1989,Har1991,KomLor152,KomLor159,KomMia2013,Meh2009} to the case of a $30$-$60$-$90$ triangular domain. In the remaining part of this Introduction we discuss
in detail this case, while postponing the more technical, general result to Section \ref{sgeneral}. 

\subsection{A case study: observability in a triangular domain}

%

We consider the problem 
\begin{equation}\label{wave}
\begin{cases}
u_{tt}-\Delta u=0& \text{in }\RR\times \T\\
u=0&\text{in }\RR\times \partial \T\\
u(t,0)=u_0,~u_t(t,0)=u_1&\text{in }\T
\end{cases}
\end{equation}
where
$\mathcal T$ is the open triangle with vertices $(0,0),(1/\sqrt{3},0)$ and $(0,1)$.
Also consider ther rectangle $\mathcal  R:=(0,\sqrt{3})\times (0,1)$ and remark that there exists $6$ rigid transformations $K_1,\dots,K_6$ satisfying the relation
$$cl(\mathcal R)=\bigcup_{h=1}^6 K_h (cl(\T)),$$ 
where $cl(\Omega)$ represent the closure of a set $\Omega$  -- see Figure \ref{tiling}. We then say that $\T$ \emph{tiles}
\footnote{ For a precise definition of tilings see Definition \ref{deftiling} below, while  the explicit 
	definition of the $K_h$'s is given in Section \ref{st1}.} $\mathcal R$. 
%
%
%
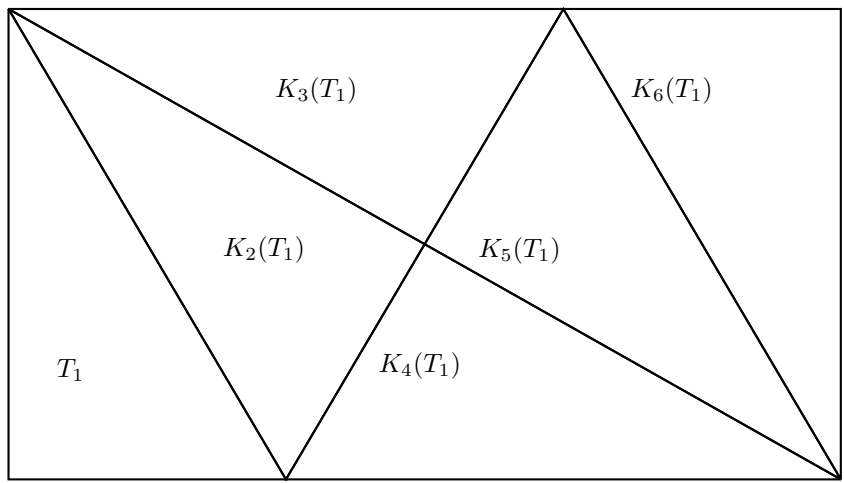
\begin{figure}\centering
	\begin{tikzpicture}[y=0.80pt, x=0.80pt, yscale=-1.000000, xscale=1.000000, inner sep=0pt, outer sep=0pt]
\begin{scope}[cm={{0.64759,0.0,0.0,-0.63445,(-725.19037,529.07247)}}]
  \begin{scope}
    \path[draw=black,line join=miter,line cap=rect,miter limit=3.25,line
      width=0.800pt] (12.6600,363.8480) -- (215.2190,13.0040) -- (12.6600,13.0040)
      -- (12.6600,363.8480) -- cycle;

    \path[draw=black,line join=miter,line cap=rect,miter limit=3.25,line
      width=0.800pt] (12.6600,363.8480) -- (417.7810,363.8480) --
      (316.5000,188.4260) -- (12.6600,363.8480) -- cycle;

    \path[draw=black,line join=miter,line cap=rect,miter limit=3.25,line
      width=0.800pt] (620.3400,13.0040) -- (215.2190,13.0040) -- (316.5000,188.4260)
      -- (620.3400,13.0040) -- cycle;

    \path[draw=black,line join=miter,line cap=rect,miter limit=3.25,line
      width=0.800pt] (620.3400,13.0040) -- (417.7810,363.8480) --
      (620.3400,363.8480) -- (620.3400,13.0040) -- cycle;

    \path[draw=black,line join=miter,line cap=rect,miter limit=3.25,line
      width=0.800pt] (12.6600,363.8480) -- (215.2190,13.0040) -- (316.5000,188.4260)
      -- (12.6600,363.8480) -- cycle;

    \path[draw=black,line join=miter,line cap=rect,miter limit=3.25,line
      width=0.800pt] (620.3400,13.0040) -- (417.7810,363.8480) --
      (316.5000,188.4260) -- (620.3400,13.0040) -- cycle;

  \end{scope}
\end{scope}
\path[xscale=1.010,yscale=0.990,fill=black,line join=miter,line cap=butt,line
  width=0.800pt] (-687.1593,478.9459) node[above right] (text4187) {$T_1$};

\path[xscale=1.010,yscale=0.990,fill=black,line join=miter,line cap=butt,line
  width=0.800pt] (-609.4599,422.4844) node[above right] (text4191) {$K_2(T_1)$};

\path[xscale=1.010,yscale=0.990,fill=black,line join=miter,line cap=butt,line
  width=0.800pt] (-489.8859,422.7302) node[above right] (text4191-3)
  {$K_5(T_1)$};

\path[xscale=1.010,yscale=0.990,fill=black,line join=miter,line cap=butt,line
  width=0.800pt] (-585.0149,346.5223) node[above right] (text4191-7)
  {$K_3(T_1)$};

\path[xscale=1.010,yscale=0.990,fill=black,line join=miter,line cap=butt,line
  width=0.800pt] (-536.5394,478.0724) node[above right] (text4191-35)
  {$K_4(T_1)$};

\path[xscale=1.010,yscale=0.990,fill=black,line join=miter,line cap=butt,line
  width=0.800pt] (-418.6524,346.5535) node[above right] (text4191-9)
  {$K_6(T_1)$};

\end{tikzpicture}
	\caption{The tiling of $\mathcal R$ with $\mathcal T$.  Note that $K_1$ is the identity map, hence $K_1(\mathcal T)=\mathcal T$. \label{tiling}}\end{figure}

As it is well known, a complete orthonormal base for $L^2( \mathcal R)$ is given by the
eigenfunctions of $-\Delta$ in $H_0^1( \mathcal R)$
$$\overline e_{k}:=\sin(\pi k_1 x_1/\sqrt{3})\sin (\pi k_2 x_2), \quad \text{where } k=(k_1,k_2),~k_1,k_2\in \NN$$
and the associated eigenvalues are $\gamma_k=\frac{k_1^2}{3}+k_2^2$. 
In \cite{triangle}, a folding technique (that we recall in detail in Section 
\ref{st1}) is used to derive from $\{\overline e_{k}\}$ an orthogonal base 
$\{e_{k}\}$ of $L^2(\mathcal T)$ formed by the eigenfunctions of $-\Delta$ in 
$H_0^1(\mathcal T)$. 
The explicit knowledge of a eigenspace for $H_0^1(\mathcal T)$ allows us 
to set the problem \eqref{wave} in the framework 
of Fourier analysis -- see \cite{KomLorbook,Ing1936,Har1989,Har1991,Beu,BaiKomLor103,BaiKomLor111}. Our goal is to exploit the deep relation between the 
eigenfunctions for $H_0^1(\mathcal R)$ and those of $H_0^1(\mathcal T)$ in order 
to extend known observability results for $\mathcal R$ to $\mathcal T$.

In particular, we are interested in the \emph{internal observability} of 
\eqref{wave}, i.e., in the validity of the estimates 
$$\|u_0\|_{L^2(\T)}^2+\|u_1\|^2_{H^{-1}(\T)}\asymp 
\int_0^T\int_{\T_0} |u(t,x)|^2 dx $$
where $\T_0$ is a subset of $\T$ and $T$ is sufficiently large.
Here 
and in the sequel $A\asymp B$ means $c_1 A\leq B \leq c_2 A$ with some 
constants 
$c_1$ and $c_2$ which are independent from $A$ and $B$. When we need to stress 
the dependence of these estimates on the couple of constants $c=(c_1,c_2)$, we 
write $A \asymp_{c} B$. Also by writing $A\leq_c B$ we mean
the inequality $ c A\leq B$ while the expression $A\geq_c B$ denotes $c A\geq 
B$.

We have\begin{theorem}\label{thmmainintro}
	Let  $\overline u$ be the solution of
	\begin{equation}\label{wavegeneralrectintro}
	\begin{cases}
	\overline u_{tt}-\Delta \overline u=0& \text{in }\RR\times  {\mathcal R}\\
	\overline u=0&\text{in }\RR\times \partial \mathcal R\\
	\overline u(t,0)= \overline u_0,~u_t(t,0)=\overline u_1&\text{in } {\mathcal R},
	\end{cases}
	\end{equation}
	let $\mathcal R_0$ be a subset of $\mathcal R$ and assume that there exists a 
	constant $T_0\geq 0$ such that if $T>T_0$ then there 
	exists a couple of constants $c=(c_1,c_2)$ such that  $\overline u$ satisfies
	\begin{equation}\label{obsrectanglegintro}
	\|\overline u_0\|_{L^2(\R)}^2+\|\overline u_1\|^2_{ H^{-1}(\R)}\asymp_{c} 
	\int_0^T\int_{\mathcal R_0} |\overline u(t,x)|^2 dx  dt
	\end{equation}
	for all $(\overline u_0, \overline u_1)\in L^2(\mathcal R)\times 
	H^{-1}(\R)$. 
	Moreover
	let
	$$\T_0:=\bigcup_{h=1}^6 K^{-1}_h( \R_0)\cap \mathcal T.$$
	Then for each $T>T_{0}$ and $(u_0,u_1)\in L^2(\mathcal T)\times H^{-1}(\mathcal T)$, the solution $u$ of \eqref{wave}  satisfies
	\begin{equation}\label{obstrgintro}
	\| u_0\|_{L^2(\T)}^2+\| u_1\|^2_{ H^{-1}(\T)}\asymp_{c} 
	\int_0^T\int_{\mathcal T_0} |\overline u(t,x)|^2 dx  dt
	\end{equation}

	The result also holds by replacing every occurrence of $\asymp_c$ with $\leq_c$ 
	or $\geq_c$.
\end{theorem}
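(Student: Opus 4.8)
The plan is to transfer \eqref{obsrectanglegintro} from $\R$ to $\T$ through the \emph{unfolding} map underlying the folding construction of Section \ref{st1}. I would introduce the extension operator $E\colon L^2(\T)\to L^2(\R)$ that sends $u$ to the function $\overline u$ whose restriction to each tile $K_h(\T)$ is $\varepsilon_h\,(u\circ K_h^{-1})$, where the signs $\varepsilon_h\in\{\pm 1\}$ are the ones prescribed by the folding so that the resulting function belongs to $H^1_0(\R)$ and $E$ carries each eigenfunction $e_k$ of $-\Delta$ on $H^1_0(\T)$ to (a fixed multiple of) the eigenfunction $\overline e_k$ of $-\Delta$ on $H^1_0(\R)$ sharing the eigenvalue $\gamma_k$. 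The entire argument rests on this single structural fact: $E$ is, up to a uniform scalar, an eigenvalue-preserving isomorphism between the two spectral decompositions.

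Since $E$ intertwines the two functional calculi of the Dirichlet Laplacian, it commutes with $\cos(t\sqrt{-\Delta})$ and $\sin(t\sqrt{-\Delta})/\sqrt{-\Delta}$; hence if $u$ solves \eqref{wave} with data $(u_0,u_1)$ then $\overline u:=Eu$ solves \eqref{wavegeneralrectintro} with data $(Eu_0,Eu_1)$. On the left-hand side, congruence of the six tiles and rigidity of the $K_h$ give $\|Eu_0\|_{L^2(\R)}^2=6\,\|u_0\|_{L^2(\T)}^2$, while writing $u_1=\sum_k\beta_k e_k$ and using the spectral expression $\|\cdot\|_{H^{-1}}^2=\sum_k\gamma_k^{-1}|\beta_k|^2$ together with the eigenvalue preservation of $E$ yields $\|Eu_1\|_{H^{-1}(\R)}^2=6\,\|u_1\|_{H^{-1}(\T)}^2$. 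Thus the left-hand energy of \eqref{obsrectanglegintro} is exactly $6$ times that of \eqref{obstrgintro}; interpreting $E$ on $H^{-1}(\T)$ by this same spectral definition (equivalently, by duality) is what makes the unfolding of the distributional datum $u_1$ legitimate.

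For the observation term I would split the region over the tiling, $\int_{\R_0}|\overline u|^2\,dx=\sum_{h=1}^6\int_{\R_0\cap K_h(\T)}|\overline u|^2\,dx$, and change variables $x=K_h(y)$ in each summand. Because $|\overline u(t,K_h(y))|=|u(t,y)|$ and the $K_h$ preserve Lebesgue measure, this equals $\int_{\T}N(y)\,|u(t,y)|^2\,dy$ with $N(y):=\#\{h:K_h(y)\in\R_0\}$, the number of points of the fold-orbit of $y$ lying in $\R_0$. Since $\mathbf 1_{\T_0}\le N\le 6\,\mathbf 1_{\T_0}$ pointwise, integrating in $t$ gives $\int_0^T\!\int_{\T_0}|u|^2\le\int_0^T\!\int_{\R_0}|\overline u|^2\le 6\int_0^T\!\int_{\T_0}|u|^2$, with equality $\int_0^T\!\int_{\R_0}|\overline u|^2=6\int_0^T\!\int_{\T_0}|u|^2$ exactly when $\R_0$ is a union of complete fold-orbits (for instance $\R_0=\bigcup_h K_h(\T_0)$).

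Combining, I substitute these two relations into \eqref{obsrectanglegintro}: the factor $6$ from the left-hand energy cancels the factor $6$ produced by a tiling-symmetric $\R_0$, giving \eqref{obstrgintro} with the \emph{same} pair $c$, and is otherwise absorbed into the constants; the one-sided variants follow by retaining only the relevant inequality at each step. I expect the main obstacle to be the rigorous verification that $E$ is simultaneously eigenvalue-preserving and compatible with the Dirichlet conditions across the interior tile interfaces, so that $Eu$ truly lies in $H^1_0(\R)$ and solves \eqref{wavegeneralrectintro} — this is precisely where the signs $\varepsilon_h$ and the folding analysis of Section \ref{st1} enter, and where one must check that $E$ extends to an isomorphism on the full Sobolev scale so that the $H^{-1}$ identity above is valid.
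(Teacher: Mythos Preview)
Your proposal is correct and follows essentially the same route as the paper: your extension $E$ is the prolongation $\mathcal P_\delta$ of Section~\ref{sgeneral}, your functional-calculus argument that $Eu$ solves \eqref{wavegeneralrectintro} is Lemma~\ref{lprol}, the norm identities are Lemma~\ref{lcr}, and the change-of-variables computation for the observation term is the proof of Theorem~\ref{thmprol}, with the admissibility of the signs $\varepsilon_h$ supplied by Lemma~\ref{ladm} and the folding construction of \cite{triangle}. Your treatment of the multiplicity $N(y)$ is in fact slightly more careful than the paper's proof of Theorem~\ref{thmprol}, which tacitly treats the sets $K_h^{-1}(\R_0)\cap\T$ as pairwise disjoint; your remark that exact preservation of the pair $c$ requires $\R_0$ to be a union of complete fold-orbits is a genuine nuance the paper glosses over.
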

We point out that the time of observability $T_{0}$ stated in Theorem 
\ref{thmmainintro}, as well as the couple $c$ of constants in the estimates 
\eqref{obsrectanglegintro} and \eqref{obstrgintro}, are the same for both the 
domains $\mathcal R$ and $\mathcal T$. Also note that in Section \ref{st1} we 
prove a slightly stronger version of Theorem \ref{thmmainintro}, that is 
Theorem \ref{thmmain}: its precise statement requires some technicalities that 
we chose to avoid here, however we may anticipate to the reader that 
the assumption on initial data $(\overline u_0,\overline u_1)\in  L^2(\mathcal 
R)\times H^{-1}(\mathcal R)$ can be weakened by replacing $ L^2(\mathcal 
R)\times H^{-1}(\mathcal R)$ with an appropriate subspace. 


\subsection{Organization of the paper.} In 
Section \ref{sgeneral} we consider a 
generic domain $\Omega$ tiling a larger domain $\Omega'$:  we establish a 
result, Theorem \ref{thmprol}, relating the observability properties of wave 
equation on $\Omega'$ and on its tile $\Omega$. Section \ref{st1} is devoted to the proof of Theorem \ref{thmmainintro}.

\section{An observability result on tilings}\label{sgeneral}
The goal of this section is to state an equivalence between an observability 
problem on a domain $\Omega$ and an observability problem on a larger domain 
$\Omega'$, under the assumption that $\Omega$ tiles $\Omega'$. 
We begin with some definitions.
\begin{definition}[Tiling]\label{deftiling}
	Let $\Omega$ and $\Omega'$ be two open bounded subsets of $\RR^n$. We say 
	that $\Omega$ \emph{tiles} $\Omega'$ if there 
	exists a set $\{K_h\}_{h=1}^N$ rigid transformations of $\RR^n$ 
	such that
	$$cl(\Omega')=\bigcup_{h=1}^N K_h(cl(\Omega))$$
	and such that $K_h(\Omega)\cap K_j(\Omega)=\emptyset$ for all $h\not=j$.
\end{definition}
\begin{definition}[Foldings and prolongations] 
	Let $(\Omega,\{K_h\}_{h=1}^N)$ be a tiling of $\Omega'$ and 
	$\delta=(\delta_1,\dots,\delta_N)\in\{-1,1\}^N$. The \emph{prolongation with 
		coefficients $\delta$} of a function $u:\Omega\to \RR$ to $\Omega'$ is the 
	function $\mathcal P_\delta u:\Omega'\to \RR$ 
	$$\mathcal P_\delta u(K_h x)=\delta_h u(x) \qquad \forall h=1,\dots,N.$$
	The \emph{folding with coefficients $\delta$} of a function $\overline 
	u:\Omega'\to \RR$ is the function $\mathcal F_\delta \overline u:\Omega\to 
	\RR$ defined by
	$$\mathcal F_\delta \overline u(x)=\frac{1}{N^2}\sum_{h=1}^N \delta_h 
	\overline u(K_h x) \qquad \forall h=1,\dots,N.$$
	When the particular choice of $\delta$ is not relevant we omit it in the 
	under scripts and we simply write $\mathcal P$ and $\mathcal F$. 
\end{definition}
\begin{definition}[Admissible tiling]
	A tiling $(\Omega,\{K_h\}_{h=1}^N)$ of $\Omega'$ is \emph{admissible} if there exists 
	$\delta\in\{-1,1\}^N$ such that 
	\begin{equation}\label{admdef}
	\mathcal F_\delta\varphi \in H^1_0(\Omega) \quad \forall \varphi \in 
	H_0^1(\Omega').
	\end{equation}
\end{definition}

\begin{example}\label{exadm}
	We show in Lemma \ref{ladm} below that the tiling of $\mathcal R$ with 
	$\mathcal T$ depicted in Figure \ref{tiling} is admissible, in particular 
	\eqref{admdef} holds with $\delta=(1,-1,1,1,-1,1)$.
	
	On the other hand the tiling of $\mathcal R':=(0,1/\sqrt{3})\times(0,1)$ 
	given 
	by the transformations $K_1':=id$ and $$K_2':(x_1,x_2)\mapsto 
	-(x_1,x_2)+(1/\sqrt{3},1),$$
	see Figure \ref{tilingbad}, is not admissible. Let indeed  
	$v_1:=(1/\sqrt{3},0)$, 
	$v_2:=(0,1)$ and $x_\lambda:=\lambda v_1 +(1-\lambda)v_2$ with $\lambda\in 
	(0,1)$. 
	Then $x_\lambda\in \partial \mathcal T$ and 
	$$K_2(x_{\lambda})=x_{1-\lambda}$$
	Therefore it suffices to choose $\varphi\in H_0^1(\mathcal R)$ such that 
	$\varphi(x_\lambda)\not=\pm \varphi(x_{1-\lambda})$ to obtain 
	$$\mathcal F_\delta \varphi(x_\lambda)=\delta_1 \varphi(x_\lambda)+\delta_2 
	\varphi(x_{1-\lambda})\not=0$$
	for all $\delta_1,\delta_2\in\{-1,1\}$. Consequently $\mathcal F_\delta 
	\varphi \not\in H_0^1(\mathcal T)$ for all $\delta\in\{-1,1\}^2$.
\end{example}

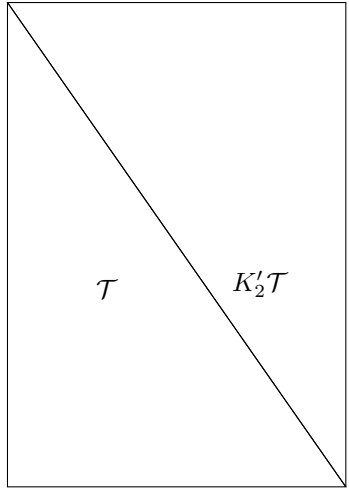
\begin{figure}
\centering	\definecolor{cffffff}{RGB}{255,255,255}

\begin{tikzpicture}[y=0.80pt, x=0.80pt, yscale=-1.000000, xscale=1.000000, inner sep=0pt, outer sep=0pt]
\begin{scope}[cm={{0.65622,0.0,0.0,-0.54271,(349.54106,742.37215)}}]
  \begin{scope}
    \path[fill=cffffff,nonzero rule] (0.0000,0.0000) -- (254.0000,0.0000) --
      (254.0000,432.0000) -- (0.0000,432.0000) -- (0.0000,0.0000) -- cycle;

    \path[draw=cffffff,line join=miter,line cap=butt,miter limit=10.00,line
      width=0.024pt] (0.0000,432.0000) -- (254.0000,432.0000) -- (254.0000,0.0000)
      -- (0.0000,0.0000) -- (0.0000,432.0000) -- cycle;

    \path[draw=black,line join=miter,line cap=rect,miter limit=3.25,line
      width=0.400pt] (5.0780,427.1250) -- (248.8630,4.8750) -- (5.0780,4.8750) --
      (5.0780,427.1250) -- cycle;

    \path[draw=black,line join=miter,line cap=rect,miter limit=3.25,line
      width=0.400pt] (248.8630,4.8750) -- (5.0780,427.1250) -- (248.8630,427.1250)
      -- (248.8630,4.8750) -- cycle;

  \end{scope}
\end{scope}
\path[xscale=1.140,yscale=0.877,fill=black,line join=miter,line cap=butt,line
  width=0.800pt] (346.3738,741.5263) node[above right] (text4200) {$\mathcal
  T$};

\path[xscale=1.140,yscale=0.877,fill=black,line join=miter,line cap=butt,line
  width=0.800pt] (402.7135,741.2077) node[above right] (text4200-3)
  {$K_2'\mathcal T$};

\end{tikzpicture}
	\caption{\label{tilingbad} A non-admissible tiling of $\mathcal 
		R'=(0,1/\sqrt{3})\times(0,1)$ with $\mathcal T$. }
\end{figure}
\begin{remark}
	We borrowed the notion of prolongation and folding from \cite{triangle}: 
	while our definition of $\mathcal P_\delta$ is exactly as it is given in 
	\cite{triangle}, we introduced a normalizing term $1/N^2$ in the definition of 
	$\mathcal F_\delta$ in order to enlighten the notations. Note that the 
	following 
	equality holds:
	\begin{equation}\label{fold2}
	\mathcal F_\delta(\mathcal P_\delta u)=\frac{1}{N}u  
	\end{equation}
	for all $u:\Omega\to\RR$.

	Also remark that we shall need to prolong and fold also functions 
	$u:\RR\times \Omega\to \RR$ and 
	$\bar u:\RR\times \Omega'\to \RR$, in this case the definition of $\mathcal 
	P$ and $\mathcal F$ naturally extends by applying the transformations $K_h$'s 
	to the spatial variables $x$. For instance
	if  $u:\RR\times \Omega\to \RR$ then its prolongation to $\RR\times 
	\Omega'$ reads
	$$\mathcal P_\delta u(t,K_h x)=\delta_h u(t,x).$$ 
\end{remark}
%
%
%

We want to establish a relation between solutions of a wave equation with 
Dirichlet boundary conditions and their prolongation. To this end we introduce 
the notations
$$\mathcal P_\delta L^2(\Omega):=\{\mathcal P_\delta u\mid u\in 
L^2(\Omega)\},$$
$$\mathcal P_\delta H^1_0(\Omega):=\{\mathcal P_\delta u\mid u\in 
H^1_0(\Omega)\}$$
and
$$\mathcal P_\delta H^{-1}(\Omega):=\{\mathcal P_\delta u\mid u\in 
H^{-1}(\Omega)\}.$$

Note that $\mathcal P_\delta L^2(\Omega)\subset L^2(\Omega')$, $\mathcal 
P_\delta H^1_0(\Omega)\subset H^1_0(\Omega')$ and $\mathcal P_\delta 
H^{-1}(\Omega)\subset H^{-1}(\Omega)$ .

All results below hold under the following assumptions on the domains 
$\Omega$, $\Omega'$ and on a base $\{e_k\}$ for $L^2(\Omega)$:
\begin{assumption}\label{A1}
	$(\Omega,\{K_h\}_{h=1}^N)$ is an admissible tiling of $\Omega'$.
\end{assumption}

\begin{assumption}\label{A2}
	$\{e_k\}$ is a base of eigenvectors of $-\Delta$ in $H^1_0(\Omega)$, it is 
	defined on $\Omega\cup \Omega'$ and there exists $\delta\in\{-1,1\}^N$ such 
	that
	$$\mathcal P_\delta (e_k|_{\Omega})=e_k|_{\Omega'}$$ 
	for each $k\in\NN$.
\end{assumption}
\begin{remark}[Some remarks on Assumption \ref{A2}]
	We note that Assumption \ref{A2} can be equivalently stated as
	\begin{equation}\label{opdef}
	e_k(K_h x)=\delta_h e_k(x)\quad \text{
		for all $x\in \Omega$, $h=1,\dots,N$, $k\in\NN$.} 
	\end{equation}
	Indeed,  by definition of prolongation and noting $\delta_h^2\equiv 1$, we have
	\begin{equation*}
	e_k(K_h x)=\delta_h^2 e_k(K_h x)=\delta_h \mathcal P_\delta e_k(x)=\delta_h 
	e_k(x).
	\end{equation*}
	for every $x\in \Omega$, $h=1,\dots,N$ and $k\in\NN$. 
	\\ Also remark that, in view of \eqref{fold2}, Assumption \ref{A2} also 
	implies 
	\begin{equation}\label{fold}
	\mathcal F_\delta e_k= \frac{1}{N} e_k. 
	\end{equation}
\end{remark}
\begin{example}
	Let $\Omega=(0,\pi)^2$ and $\Omega'=(0,2\pi)^2$. Consider the 
	transformations of $\RR^2$
	$$ \begin{array}{ll}
	K_1:= \text{id}, &~K_2: (x_1,x_2)\mapsto (-x_1+2\pi,x_2),\\
	K_3:(x_1,x_2)\mapsto (x_1,-x_2+2\pi),&~K_4: (x_1,x_2)\mapsto 
	-(x_1,x_2)+(2\pi,2\pi)  
	\end{array}.$$
	Then $\{\Omega,\{K_h\}_{h=1}^4\}$ is a tiling for $\Omega'$.  In 
	particular, Assumption \ref{A1} is satisfied: indeed setting 
	$\delta=(1,-1,-1,1)$ we have for each $\varphi\in H_0^1(\Omega')$
	$$\mathcal F_\delta \varphi(x)= 0 \quad \forall x\in \partial \Omega.$$
	Also note that the functions
	$$e_k(x):=\sin (k_1 x_1)\sin (k_2x_2)\quad k=(k_1,k_2)\in\NN^2$$
	satisfy Assumption \ref{A2}, indeed they are a base for $L^2(\Omega)$ 
	composed by eigenfunctions of $-\Delta$ in $H_0^1(\Omega)$ and
	$$e_k( K_h (x)):=\delta_h e_k(x)$$
	for all $x\in \RR^2$, $h=1,\dots,4$ and $k\in \NN^2$. The space 
	$\mathcal P_\delta L^2(\Omega)$ in this case coincides with the space of 
	so-called \emph{$(2,2)$-cyclic functions}, i.e.,  functions in $L^2(\Omega')$ 
	which are odd with respect to both axes $x_1=\pi$ and $x_2=\pi$.
	We refer to \cite{KomLor159} for some results on observability of wave 
	equation 
	with $(p,q)$-cyclic initial data. 
\end{example}

\vskip0.5cm
Our starting point is to show that, under Assumption \ref{A1} and Assumption \ref{A2}, 
the base of eigenfunctions $\{e_k\}$ is also a base of eigenfunctions also for 
an appropriate subspace of $L^2(\Omega')$, and to compute the associated 
coefficients.  
\begin{lemma}\label{lcr}
	Let $\Omega,\Omega'$ and $\{e_k\}$ satisfy Assumption \ref{A1} and 
	Assumption \ref{A2}.
	
	%
	
	Then $\{e_k\}\subset H^1_0(\Omega')$ and it is also a complete base for 
	$\mathcal P_\delta L^2(\Omega)$ formed by eigenfunctions of $-\Delta$ in 
	$\mathcal P_\delta H_0^1(\Omega')$.
	
	In particular, for every $k\in \NN$, if $u_k$ is the coefficient of 
	$u\in L^2(\Omega)$ (with respect to $e_k$) then $Nu_k$ is the coefficient of 
	$\mathcal P_\delta u$. 
\end{lemma}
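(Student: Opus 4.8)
The plan is to reduce every assertion of the lemma to a single change-of-variables identity on the tiles, namely
$$\int_{\Omega'}(\mathcal P_\delta u)\, e_k\, dy = N\int_{\Omega} u\, e_k\, dx \qquad (u\in L^2(\Omega)),$$
together with its gradient analogue. All the structure I need is already packaged in the hypotheses: the decomposition $cl(\Omega')=\bigcup_{h=1}^N K_h(cl(\Omega))$ with pairwise disjoint interiors (Definition \ref{deftiling}), the fact that each $K_h$ is a rigid motion and hence measure preserving with unit Jacobian, and the symmetry relation \eqref{opdef}, i.e. $e_k(K_h x)=\delta_h e_k(x)$, which is the content of Assumption \ref{A2}.

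First I would dispose of the inclusion $\{e_k\}\subset H^1_0(\Omega')$: by Assumption \ref{A2} the restriction $e_k|_{\Omega'}$ equals $\mathcal P_\delta(e_k|_{\Omega})$, and since $e_k|_{\Omega}\in H^1_0(\Omega)$ this lies in $\mathcal P_\delta H^1_0(\Omega)\subset H^1_0(\Omega')$, the last inclusion being the one recorded just after the definition of the prolongation spaces (it is here that admissibility, Assumption \ref{A1}, enters). To prove the transfer identity I would split $\int_{\Omega'}$ as $\sum_{h=1}^N\int_{K_h(\Omega)}$, substitute $y=K_h x$ so that $dy=dx$, and then replace $\mathcal P_\delta u(K_h x)$ by $\delta_h u(x)$ (definition of prolongation) and $e_k(K_h x)$ by $\delta_h e_k(x)$ (relation \eqref{opdef}); the two signs combine into $\delta_h^2=1$, each summand collapses to $\int_\Omega u\, e_k$, and summing over $h$ yields the factor $N$.

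From this one identity everything follows. Taking $u=e_j$ gives $\langle e_k,e_j\rangle_{L^2(\Omega')}=N\langle e_k,e_j\rangle_{L^2(\Omega)}$, so the $e_k$ remain pairwise orthogonal in $L^2(\Omega')$, with $\|e_k\|^2_{L^2(\Omega')}=N\|e_k\|^2_{L^2(\Omega)}$. The same tile decomposition shows that $\tfrac1{\sqrt N}\mathcal P_\delta$ is an isometry from $L^2(\Omega)$ into $L^2(\Omega')$, so its range $\mathcal P_\delta L^2(\Omega)$ is closed; applying the continuous map $\mathcal P_\delta$ term by term to the expansion $u=\sum_k u_k e_k$ (valid since $\{e_k\}$ is a complete base of $L^2(\Omega)$) and using $\mathcal P_\delta e_k=e_k|_{\Omega'}$ gives $\mathcal P_\delta u=\sum_k u_k e_k$ in $L^2(\Omega')$. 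Hence $\{e_k\}$ is a complete orthogonal base of $\mathcal P_\delta L^2(\Omega)$. The transfer identity read as $\langle \mathcal P_\delta u,e_k\rangle_{L^2(\Omega')}=N\langle u,e_k\rangle_{L^2(\Omega)}=N u_k$ is exactly the claimed coefficient relation.

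It remains to check that each $e_k$ is an eigenfunction of $-\Delta$ on the prolongation space $\mathcal P_\delta H^1_0(\Omega)$, and this is the step I expect to require the most care. I would prove the gradient version of the transfer identity: for $\varphi=\mathcal P_\delta w$ with $w\in H^1_0(\Omega)$, split $\int_{\Omega'}\nabla e_k\cdot\nabla\varphi$ over the tiles and substitute $y=K_h x$. Writing $K_h x=A_h x+b_h$ with $A_h$ orthogonal, differentiation of $e_k(K_h\,\cdot)=\delta_h e_k$ and $\varphi(K_h\,\cdot)=\delta_h w$ gives $(\nabla e_k)(K_h x)=\delta_h A_h\nabla e_k(x)$ and $(\nabla\varphi)(K_h x)=\delta_h A_h\nabla w(x)$; the orthogonal factor cancels in the inner product because $A_h^{\top}A_h=I$, and again $\delta_h^2=1$, so $\int_{\Omega'}\nabla e_k\cdot\nabla\varphi=N\int_\Omega\nabla e_k\cdot\nabla w$. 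Combining this with the eigenfunction identity $\int_\Omega\nabla e_k\cdot\nabla w=\lambda_k\int_\Omega e_k w$ in $\Omega$ and with the transfer identity for the $L^2$ pairing yields $\int_{\Omega'}\nabla e_k\cdot\nabla\varphi=\lambda_k\int_{\Omega'}e_k\varphi$ for every $\varphi\in\mathcal P_\delta H^1_0(\Omega)$. The delicate point worth stressing is that the weak eigenvalue equation is asserted only against test functions in the prolongation subspace, not against all of $H^1_0(\Omega')$: it is precisely this restriction that makes the boundary contributions coming from the reflections across the interior interfaces cancel, which is the role played by the admissibility of the tiling.
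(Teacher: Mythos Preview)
Your computation of the $L^2$ transfer identity and the completeness/coefficient argument match the paper's Claim~2 essentially line for line. The eigenfunction part, however, follows a different route from the paper, and your remarks on admissibility are misplaced.

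The paper's Claim~1 proves the \emph{stronger} statement that $\int_{\Omega'}\nabla e_k\cdot\nabla\varphi=\gamma_k\int_{\Omega'}e_k\,\varphi$ for \emph{every} $\varphi\in H^1_0(\Omega')$, not merely for prolongations. The mechanism is the folding operator: for arbitrary $\varphi\in H^1_0(\Omega')$, admissibility (Assumption~\ref{A1}) ensures $\mathcal F_\delta\varphi\in H^1_0(\Omega)$, so one may invoke the eigenvalue identity on $\Omega$ against this folded test function; the tile decomposition together with \eqref{opdef} then lifts the identity back to $\Omega'$. That is where admissibility is genuinely used. By contrast, your argument tests only against $\varphi=\mathcal P_\delta w$ and needs neither folding nor admissibility: the relation $\varphi(K_h\,\cdot)=\delta_h w$ is the definition of prolongation, and the cancellation you attribute to admissibility is simply $\delta_h^2=1$. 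Likewise the inclusion $\mathcal P_\delta H^1_0(\Omega)\subset H^1_0(\Omega')$ does not use admissibility; since $u\in H^1_0(\Omega)$ has zero trace on $\partial\Omega$, its prolongation vanishes on every internal interface and on $\partial\Omega'$ for any sign pattern $\delta$. So in your proof Assumption~\ref{A1} is invoked but never actually exploited. Your weaker conclusion suffices for the lemma as literally stated, but the paper's stronger one---that the $e_k$ are eigenfunctions on all of $H^1_0(\Omega')$---is what is tacitly relied on in Lemma~\ref{lprol} to identify the Fourier series with the solution of the full wave equation on $\Omega'$.
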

\begin{proof} The proof is organized two steps. \\
	\emph{Claim 1: $\{e_k\}$ is a set of eigenfunctions of $-\Delta$ in 
		$H_0^1(\Omega')$}. Extending a result given in \cite{triangle}, we need to 
	show that, under Assumption \ref{A1} and Assumption \ref{A2}, if $e_k\in 
	H_0^1(\Omega)$ is a solution of the boundary value problem 
	$$ \int_{\Omega} \nabla e_k\nabla \varphi dx =\int_{\Omega} \gamma_k e_k 
	\varphi dx \quad \forall \varphi \in H_0^1(\Omega)
	$$
	for some $\gamma_k\in\RR$, then $e_k$ is also solution of the boundary value 
	problem on $\Omega'$
	$$ \int_{\Omega'} \nabla  e_k \nabla \varphi dx =\int_{\Omega'} \gamma_k e_k 
	\varphi dx \quad \forall \varphi \in H_0^1(\Omega').
	$$
	
	Now, recall from  Assumption \ref{A1} that if $\varphi \in H_1^0(\Omega')$ 
	then 
	$\mathcal F_\delta \varphi\in H_1^0(\Omega)$.  Then it follows again from 
	Assumption \ref{A1} and from Assumption \ref{A2} (in particular by recalling 
	that $K_h$'s are isometries and \eqref{opdef}) that for all $\varphi \in 
	H_0^1(\Omega')$
	\begin{align*}
	\int_{\Omega'} \nabla  e_k(x)&\nabla \varphi(x) dx= \int_{\bigcup_{h=1}^N K_h(
		\Omega)} 
	\nabla  e_k(x) \nabla \varphi(x) dx\\
	&=\sum_{h=1}^N \int_{\Omega} \nabla  e_k(K_h x) \nabla \varphi(K_h x) dx = 
	\int_{\Omega}  \nabla  e_k(x) \sum_{h=1}^N \delta_h \nabla \varphi(K_h x) dx\\
	&= \int_{\Omega}\nabla e_k(x) \nabla \mathcal F_\delta \varphi(x) dx= 
	\int_{\Omega} \gamma_k  e_k(x) \mathcal F_\delta \varphi(x) dx\\
	&= \int_{\Omega'} \gamma_k  e_k(x) \varphi(x) dx.
	\end{align*}
	and this completes the proof of Claim 1. 
	
	\emph{Claim 2: completeness of $\{e_k\}$ and computation of coefficients} By 
	Assumption \ref{A1} and Assumption \ref{A2} and by recalling $\delta_h^2=1$ for 
	each $h=1,\dots,N$, we have
	\begin{align*}
	\int_{\Omega'} \mathcal P_\delta u(x) e_k(x)dx&=\int_{\Omega'} \mathcal 
	P_\delta u(x) \mathcal P_\delta e_k(x)dx\\
	&=
	\sum_{h=1}^N\int_{K_h (\Omega)} \mathcal P_\delta u(x) \mathcal Pe_k(x)dx\\
	&=\sum_{h=1}^N\int_{K_h (\Omega)} \delta_h^2 u(K_h (x)) e_k(K_h (x))dx\\
	&=\sum_{h=1}^N\int_{\Omega}  u(x) e_k(x)dx=N\int_{\Omega}  u(x) e_k(x)dx,
	\end{align*}
	where the second to last equality holds because $K_h$'s are isometries. Then we may deduce two facts: first if $\{u_k\}$ are the 
	coefficients of $u\in L^2(\Omega)$ then $\{N u_k\}$ are coefficients of 
	$\mathcal P_\delta u$. Secondly, $\{e_k\}$ is a complete base for $\mathcal 
	P_\delta L^2(\Omega)$, indeed if the coefficients of $\mathcal P_\delta u$ 
	are 
	identically null, then also the coefficients of $u$ are identically null: since 
	$\{e_k\}$ is complete for $\Omega$ then $u\equiv 0$ and, consequently, 
	$\mathcal P_\delta u\equiv 0$, as well. 
\end{proof}
Next result establishes a relation between solutions of wave equations on tiles 
and their prolongations.
\begin{lemma}\label{lprol} 
	Let $\Omega,\Omega'$ and $\{e_k\}$ satisfy Assumption \ref{A1} and 
	Assumption \ref{A2}. 
	Let $u$ be the solution of
	\begin{equation}\label{wavegeneral}
	\begin{cases}
	u_{tt}-\Delta u=0& \text{in }\RR\times \Omega\\
	u=0&\text{in }\RR\times \partial \Omega\\
	u(t,0)=u_0,~u_t(t,0)=u_1&\text{in }\Omega
	\end{cases}
	\end{equation}
	Then $u$ is well defined in $\Omega\cup\Omega'$ and $\overline u=N 
	u|_{\Omega'}$ is the solution of 
	\begin{equation}\label{wavegeneral2}
	\begin{cases}
	\overline u_{tt}-\Delta \overline u=0& \text{in }\RR\times \Omega'\\
	\overline u=0&\text{in }\RR\times \partial \Omega'\\
	\overline u(t,0)=\mathcal P_\delta u_0,~\overline u(t,0)=\mathcal P_\delta 
	u_1&\text{in }\Omega'
	\end{cases}
	\end{equation}
	Conversely, if $\bar u$ is the solution of \eqref{wavegeneral2} then $\mathcal 
	F_\delta \bar u$ is the solution of  \eqref{wavegeneral} and for every 
	$h=1,\dots,N$ 
	\begin{equation}\label{sol}
	\mathcal F_\delta \bar u(t,x)= \frac{\delta_h}{N} \bar u(t,K_h x)\quad 
	\text{for each $x\in \Omega$}. 
	\end{equation}
	
\end{lemma}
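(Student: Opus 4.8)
The plan is to reduce both equations to their spectral representations in the common eigenbasis $\{e_k\}$, exploiting that by Lemma \ref{lcr} the functions $e_k$ are simultaneously eigenfunctions of $-\Delta$ in $H_0^1(\Omega)$ and in $\mathcal P_\delta H_0^1(\Omega')$, with the \emph{same} eigenvalues $\gamma_k$. First I would write the (unique, finite-energy) solution of \eqref{wavegeneral} as
$$u(t,x)=\sum_k\Big(u_{0,k}\cos(\sqrt{\gamma_k}\,t)+\frac{u_{1,k}}{\sqrt{\gamma_k}}\sin(\sqrt{\gamma_k}\,t)\Big)e_k(x),$$
where $u_{0,k}$ and $u_{1,k}$ are the coefficients of $u_0$ and $u_1$ with respect to $\{e_k\}$. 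Since by Assumption \ref{A2} each $e_k$ is in fact defined on $\Omega\cup\Omega'$, this same series defines $u$ on all of $\Omega\cup\Omega'$; this is the content of ``$u$ is well defined in $\Omega\cup\Omega'$''. Using \eqref{opdef}, namely $e_k(K_h x)=\delta_h e_k(x)$, I would check term by term that the restriction of the extension to $\Omega'$ coincides with $\mathcal P_\delta$ applied to each time slice, so that $u|_{\Omega'}(t,\cdot)=\mathcal P_\delta(u(t,\cdot))$.

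For the forward implication I would then verify that $\overline u:=N\,u|_{\Omega'}$ solves \eqref{wavegeneral2}. Each summand $e_k(x)\cos(\sqrt{\gamma_k}\,t)$ (and likewise with $\sin$) satisfies $\partial_{tt}-\Delta=0$ on $\Omega'$ precisely because, by Lemma \ref{lcr}, $-\Delta e_k=\gamma_k e_k$ in $\mathcal P_\delta H_0^1(\Omega')$; the homogeneous boundary condition holds since $e_k\in H_0^1(\Omega')$. The initial data are recovered, and the normalization relating the $\Omega$- and $\Omega'$-expansions is exactly the coefficient identity of Lemma \ref{lcr} (the coefficient of $\mathcal P_\delta u$ is $N$ times that of $u$), which accounts for the factor $N$. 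Uniqueness of finite-energy solutions then pins down $\overline u$.

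For the converse I would start from a solution $\overline u$ of \eqref{wavegeneral2}. Since its initial data lie in $\mathcal P_\delta L^2(\Omega)$ and $\{e_k\}$ is a complete eigenbasis of this space (Lemma \ref{lcr}), the evolution remains in $\mathcal P_\delta L^2(\Omega)$, so for every $t$ one has $\overline u(t,\cdot)=\mathcal P_\delta v(t,\cdot)$ for some $v(t,\cdot)$ on $\Omega$. Applying $\mathcal F_\delta$ to the spectral series of $\overline u$ and using \eqref{fold}, $\mathcal F_\delta e_k=\tfrac1N e_k$, term by term shows that $\mathcal F_\delta\overline u$ is again a spectral series in $\{e_k\}$ on $\Omega$ with the correct time-coefficients, hence solves \eqref{wavegeneral}. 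Finally, \eqref{sol} is immediate from the structure $\overline u(t,\cdot)=\mathcal P_\delta v(t,\cdot)$: indeed $\delta_h\overline u(t,K_h x)=\delta_h^2 v(t,x)=v(t,x)$ is independent of $h$, so the $N$ equal terms in $\mathcal F_\delta\overline u(t,x)=\tfrac1{N^2}\sum_h\delta_h\overline u(t,K_h x)$ collapse to $\tfrac{\delta_h}{N}\overline u(t,K_h x)$ for any single $h$.

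The step I expect to require the most care is the rigorous justification of the term-by-term manipulations: that the extended series converges in the appropriate energy space on $\Omega'$ and may be differentiated so as to yield a genuine (distributional/finite-energy) solution there, and that the normalization factor $N$ is bookkept consistently between the $L^2(\Omega)$ and $L^2(\Omega')$ inner products. Once the common-eigenbasis structure furnished by Lemma \ref{lcr} is in hand, the remaining identities are routine.
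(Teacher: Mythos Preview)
Your proposal is correct and follows essentially the same approach as the paper: both arguments expand the solution in the common eigenbasis $\{e_k\}$, use Lemma~\ref{lcr} to track the factor $N$ between the $\Omega$- and $\Omega'$-coefficients, and derive the converse together with \eqref{sol} from \eqref{fold} and the symmetry $e_k(K_h x)=\delta_h e_k(x)$. The only cosmetic differences are that the paper uses complex exponentials $e^{\pm i\omega_k t}$ rather than $\cos/\sin$, and obtains \eqref{sol} by the direct computation $\bar u(t,x)=\delta_h\bar u(t,K_h x)$ on the series rather than via your equivalent ``collapse of equal terms'' phrasing.
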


\begin{proof}
	Let $\{\gamma_k\}$ be the sequence of eigenvalues associated to $\{e_k\}$ and 
	set $\omega_k=\sqrt{\gamma_k}$, for every $k\in\NN$. Expanding $u(t,x)$ with 
	respect to $e_k$ we obtain 
	$$u(t,x)=\sum_{k=1}^\infty (a_k e^{i\omega_k t}+b_k e^{-i\omega_k t})e_k(x)$$
	with $a_k$ and $b_k$  depending only the coefficients $c_k$ and $d_k$ of 
	$u_0$ and $u_1$ with respect to $\{e_k\}$.  In particular $a_k+b_k=c_k$ and 
	$a_k-b_k=-id_k/\omega_k$.
	We then have that the natural domain of $u$ coincides with the one of 
	$\{e_k\}$'s, hence it is included in $\Omega\cup\Omega'$. 
	By Lemma \ref{lcr}  the coefficients of $\mathcal P_\delta u_0$ and $\mathcal 
	P_\delta u_1$ are $Nc_k$ and
	$N d_k$, respectively. Then it is immediate to verify that
	$$N u(t,x)=\sum_{k=1}^\infty (N a_k e^{i\omega_k t}+ N b_k e^{-i\omega_k 
		t})e_k(x)$$
	is the solution of \eqref{wavegeneral2}. 
	
	Now, let 
	$$\bar u(t,x)= \sum_{k=1}^\infty (\bar a_k e^{i\omega_k t}+ \bar b_k 
	e^{-i\omega_k t}) e_k(x)$$
	be the solution of \eqref{wavegeneral2}, and note that, by the reasoning above, 
	setting $a_k:=\frac{1}{N}\bar a_k$ and $b_k:=\frac{1}{N}\bar b_k$ we have that 
	$$u(t,x):=\sum_{k=1}^\infty ( a_k e^{i\omega_k t}+ b_k e^{-i\omega_k t}) 
	e_k(x)=\frac{1}{N}\bar u(t,x)$$
	is the solution of \eqref{wavegeneral}. Hence to prove that $u(t,x)=\mathcal 
	F_\delta \bar u(t,x)$ it 
	it suffices to note that by Assumption \ref{A1} (see in particular 
	\eqref{fold}) 
	\begin{align*}
	\mathcal F_\delta \bar u(t,x)&=\sum_{k=1}^\infty (\bar a_k e^{i\omega_k t}+ 
	\bar b_k e^{-i\omega_k t}) \mathcal F_\delta 
	e_k(x)\\
	&=\frac{1}{N}\sum_{k=1}^\infty (\bar a_k e^{i\omega_k t}+ \bar b_k 
	e^{-i\omega_k t}) e_k(x)= \frac{1}{N}\bar u(t,x).
	\end{align*}
	Finally, we show \eqref{sol}: for each $h=1,\dots,N$ we have
	\begin{align*}
	\bar u(t,x)&=\delta_h^2 \bar u(t,x) =\delta_h \sum_{k=1}^\infty (\bar a_k 
	e^{i\omega_k t}+ \bar b_k e^{-i\omega_k t})  \delta _h e_k(x)\\
	&=\sum_{k=1}^\infty (\bar a_k e^{i\omega_k t}+ \bar b_k e^{-i\omega_k t}) 
	e_k(K_h x)=\delta_h \bar u(t,K_h x)
	\end{align*}
	and this concludes the proof.
	\eqref{wavegeneral2}.
\end{proof}
We are now in position to state the main result of this section, that bridges 
observability of tiles
with their prolongations.
\begin{theorem}\label{thmprol}
	Let $\Omega,\Omega'$ and $\{e_k\}$ satisfy Assumption \ref{A1} and 
	Assumption \ref{A2}. 
	Let $u$ be the solution of
	\begin{equation}\label{wavegeneralprol}
	\begin{cases}
	u_{tt}-\Delta u=0& \text{in }\RR\times \Omega\\
	u=0&\text{in }\RR\times \partial \Omega\\
	u(t,0)=u_0,~u_t(t,0)=u_1&\text{in }\Omega
	\end{cases}
	\end{equation}
	with $u_0,u_1\in L^2(\Omega)\times H^{-1}(\Omega)$ and let $\overline u$ be 
	the solution of
	\begin{equation}\label{wavegeneralprol2}
	\begin{cases}
	u_{tt}-\Delta u=0& \text{in }\RR\times \Omega'\\
	u=0&\text{in }\RR\times \partial \Omega'\\
	u(t,0)=\mathcal P_\delta u_0,~u_t(t,0)=\mathcal P_\delta u_1&\text{in 
	}\Omega'.
	\end{cases}
	\end{equation}
	Also let $\Omega'_0\subset \Omega'$ and define 
	$$\Omega_0:=\bigcup_{h=1}^N K_h^{-1}(\Omega'_0) \cap \Omega.$$
	Then for every $T>0$ and for every couple $c=(c_1,c_2)$ of positive constants, 
	the inequalities
	\begin{equation}\label{c1}
	\|u_0\|_{L^2(\Omega)}^2+\|u_1\|^2_{H^{-1}(\Omega)}\asymp_c \int_0^T\int_{\Omega_0} |u(t,x)|^2 dxdt. 
	\end{equation}
	hold if and only if 
	\begin{equation}\label{c2}
	\|\mathcal P_\delta u_0\|_{L^2(\Omega')}^2+\|\mathcal P_\delta u_1\|^2_{H^{-1}(\Omega')}\asymp_c \int_0^T\int_{\Omega'_0} |u(t,x)|^2 dxdt. 
	\end{equation}
\end{theorem}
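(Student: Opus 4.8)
The plan is to prove the two-sided implication by showing that each of the two quadratic quantities in \eqref{c2} is a fixed multiple of the corresponding quantity in \eqref{c1}, so that an estimate holding with a given pair $c=(c_1,c_2)$ on one domain transfers verbatim to the other. Concretely, I would establish that the left-hand side of \eqref{c2} equals $N$ times the left-hand side of \eqref{c1}, and that the observation integral over $\Omega'_0$ equals $N$ times the observation integral over $\Omega_0$; once both sides scale by the same factor $N$, the constants $c_1,c_2$ are preserved and the equivalence \eqref{c1}$\Leftrightarrow$\eqref{c2} is immediate. The one-sided variants obtained by replacing $\asymp_c$ with $\leq_c$ or $\geq_c$ then follow identically, since only the proportionality of the four quantities is used.

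For the energy terms I would invoke Lemma \ref{lcr}. Writing $u_0=\sum_k c_k e_k$ and $u_1=\sum_k d_k e_k$, linearity of $\mathcal P_\delta$ together with Assumption \ref{A2} gives that $\mathcal P_\delta u_0,\mathcal P_\delta u_1$ expand over $\{e_k\}$ on $\Omega'$ with the \emph{same} coefficients $c_k,d_k$. Taking $u=e_k$ in the identity $\int_{\Omega'}\mathcal P_\delta u\, e_k\,dx=N\int_{\Omega}u\, e_k\,dx$ from the proof of Lemma \ref{lcr} yields $\|e_k\|_{L^2(\Omega')}^2=N\|e_k\|_{L^2(\Omega)}^2$, so every quadratic norm is multiplied by $N$ under prolongation. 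Since the eigenvalues $\gamma_k$ attached to $e_k$ are common to $\Omega$ and $\Omega'$ by Claim 1 of Lemma \ref{lcr}, the $H^{-1}$ term scales exactly as the $L^2$ term, and we obtain $\|\mathcal P_\delta u_0\|_{L^2(\Omega')}^2+\|\mathcal P_\delta u_1\|_{H^{-1}(\Omega')}^2=N\big(\|u_0\|_{L^2(\Omega)}^2+\|u_1\|_{H^{-1}(\Omega)}^2\big)$.

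For the observation integrals I would use the pointwise identity \eqref{sol} of Lemma \ref{lprol}, which shows that on each tile the solution is, up to the sign $\delta_h$, a rigid copy of the solution on $\Omega$; in particular $|u(t,\cdot)|^2$ is constant along the fibres $\{K_h x:h=1,\dots,N\}$. Partitioning $\Omega'$ into the essentially disjoint tiles $K_h(\Omega)$, writing $\Omega'_0=\bigsqcup_h\big(\Omega'_0\cap K_h(\Omega)\big)$, and applying in each piece the isometry $x=K_h y$ (so that $dx=dy$ and $y$ ranges over $K_h^{-1}(\Omega'_0)\cap\Omega$), I would reduce $\int_{\Omega'_0}|u(t,x)|^2\,dx$ to a sum over $h$ of integrals of $|u(t,y)|^2$ over $K_h^{-1}(\Omega'_0)\cap\Omega$, and then integrate in $t$. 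Collecting these contributions against $\Omega_0=\bigcup_h K_h^{-1}(\Omega'_0)\cap\Omega$ produces the sought proportionality between $\int_0^T\int_{\Omega'_0}$ and $\int_0^T\int_{\Omega_0}$.

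The delicate point — and the step I expect to require the most care — is precisely this last bookkeeping. The sets $K_h^{-1}(\Omega'_0)\cap\Omega$ entering the sum need not be pairwise disjoint, so a point of $\Omega_0$ may be covered with a multiplicity between $1$ and $N$, and one must check that this multiplicity is compatible with the factor $N$ arising on the energy side, so that the \emph{same} constants survive rather than constants degraded by a factor up to $N$. I would control this by exploiting the fibre structure of the tiling together with the sign-independence of $|u|^2$: since $|u(t,\cdot)|^2$ is fibrewise constant, the observation integral over $\Omega'_0$ coincides with the one over its tiling-invariant saturation $\bigcup_h K_h(\Omega_0)$, and for a saturated observation region each fibre of $\Omega_0$ is covered exactly $N$ times, which is what converts the $N$ on the energy side into the matching $N$ on the observation side. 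Once the geometry of $\Omega'_0$ relative to the tiling is pinned down in this way, the remaining manipulations are routine changes of variables.
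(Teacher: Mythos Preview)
Your strategy is the paper's: show that the energy and the observation integral each pick up the same fixed factor under prolongation, then cancel. For the observation side the paper writes $\Omega_0'=\bigcup_h K_h(\Omega_h)$ with $\Omega_h:=K_h^{-1}(\Omega_0')\cap\Omega$ (this union \emph{is} disjoint, since the tiles $K_h(\Omega)$ are), changes variables via the isometries $K_h$ and the pointwise relation of Lemma~\ref{lprol}, and then collapses $\sum_h\int_{\Omega_h}$ to $\int_{\Omega_0}$; for the energy it quotes Lemma~\ref{lcr}. Your computation $\|e_k\|_{L^2(\Omega')}^2=N\|e_k\|_{L^2(\Omega)}^2$ is correct, and your treatment of the $H^{-1}$ term via the shared eigenvalues is the right mechanism.

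The genuine gap is your saturation step. The claim that ``the observation integral over $\Omega'_0$ coincides with the one over its tiling-invariant saturation $\bigcup_h K_h(\Omega_0)$'' is false: fibrewise constancy of $|u|^2$ does \emph{not} make the integral insensitive to enlarging the region of integration along fibres. If $\Omega_0'=K_1(A)$ for some $A\subset\Omega$ of positive measure, the saturation is $\bigcup_h K_h(A)$ and carries $N$ times the integral over $\Omega_0'$; conversely, if $\Omega_0'=K_1(A)\cup K_2(A)$ then $\Omega_0=A$ but the tile-by-tile sum gives $2\int_A\neq\int_{\Omega_0}$. In general the ratio is governed by the multiplicity $m(y)=\#\{h:K_hy\in\Omega_0'\}$, which ranges over $\{1,\dots,N\}$ on $\Omega_0$, so without further hypotheses one only gets the two observation integrals comparable up to a factor $N$ --- enough for an equivalence with \emph{some} constants, but not for the stated preservation of the pair $c$. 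The paper sidesteps this by asserting that the $\Omega_h$ are pairwise disjoint; you are right to flag that this is not automatic for an arbitrary $\Omega_0'$, but when it holds the collapse $\sum_h\int_{\Omega_h}=\int_{\Omega_0}$ is exact, whereas your saturation device does not recover exact constants in any case.
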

\begin{proof}
	By Lemma \ref{lprol}, $u$ and $\overline u$ satisfy
	$$u(t,x)=\frac{\delta_h}{N}\overline u(t,K_h x)\quad \text{ for all 
	}h=1,\dots,N.$$
	Since $\Omega$ tiles $\Omega'$, then setting $\Omega_h:= K_h^{-1}(\Omega')\cap 
	\Omega$ we have $\Omega_0=\bigcup_{h=1}^N \Omega_h$ and $\Omega_0'=\bigcup_{h=1}^N K_h(\Omega_h)$, and that these unions are disjoint. Hence, also recalling 
	$|\delta_h|\equiv 1$ and that $K_h$'s are isometries, we have
	\begin{align*}
	\int_I\int_{\Omega_0'} |\overline u(t,x)|^2 dx&= \sum_{h=1}^N\int_I\int_{K_h 
		(\Omega_h)} |\overline u(t,x)|^2 dx\\
	&= \sum_{h=1}^N\int_I\int_{\Omega_h} |\overline u(t,K_h (x))|^2 dx\\
	&= N^2\sum_{h=1}^N\int_I\int_{\Omega_h} \left|\frac{\delta_h}{N}\overline u(t,K_h(x))\right|^2 
	dx\\
	&= N^2\sum_{h=1}^N\int_I\int_{\Omega_h} |u(t, x)|^2 dx\\
	&= N^2\int_I\int_{\Omega_0} |u(t, x)|^2 dx\\
	\end{align*}
	Finally, by Lemma \ref{lcr}  
	$$\|\mathcal P_\delta u_0\|^2_{L^2(\Omega')}= N^2\| u_0\|_{L^2(\Omega)}^2\quad\text{and}\quad 
	\|\mathcal P_\delta u_1\|^2_{H^{-1}(\Omega')}=N^2\| u_1\|_{H^{-1}(\Omega)}^2$$
	and this implies the equivalence between \eqref{c1} and \eqref{c2}.
\end{proof}

\section{Proof of Theorem \ref{thmmainintro}}\label{st1}
The proof of Theorem \ref{thmmainintro} is based on the application of Theorem 
\ref{thmprol} to the particular case 
$$\Omega=\mathcal T \quad \text{and}\quad \Omega'=\mathcal R.$$
%

We then need to admissibly tile $\mathcal R$ with $\mathcal T$ and a base 
$\{e_k\}$ formed by the eigenfunctions of $-\Delta$ in $H_0^1(\mathcal T)$ 
satisfying Assumption \ref{A2}. Such ingredients are provided in 
\cite{triangle}: in order to introduce them we need some notations. We consider 
the Pauli matrix
$$\sigma_z:=\begin{pmatrix}
1&0\\
0&-1\\
\end{pmatrix}$$
and the rotation matrix 
$$R_\alpha:=\begin{pmatrix}
\cos \alpha &\sin\alpha\\
-\sin\alpha&\cos\alpha\\
\end{pmatrix}$$
where $\alpha:=\pi/3$. Now let $v_1:=(0,1/\sqrt{3})$ and $v_2:=(0,1)$ be two of 
the three vertices of $\mathcal T$ and define the transformations from 
$\RR^2$ onto itself
\begin{equation}\label{kdef}
{\begin{array}{ll}
	K_1:=id;\quad &\quad K_4:x\mapsto- R_\alpha(x-v_2)+ 
	3v_1\\
	K_2: x\mapsto -R_\alpha \sigma_z(x-v_2)+v_2;&\quad 
	K_5:  x\mapsto -R_\alpha(x-v_2)+3 v_1+v_2\\
	K_3: x\mapsto R_\alpha(x-v_2)+v_2;&\quad K_6: x\mapsto 
	-x+3 v_1+v_2
	\end{array}}
\end{equation}
and note $(\mathcal T, \{K_h\}_{h=1}^6)$ is a tiling for $\mathcal R$. Indeed 
\begin{equation}\label{a1triangle}
cl( \mathcal R)=\bigcup_{h=1}^6 K_h cl(\mathcal T),
\end{equation}
and the sets $K_h \mathcal T$, for $h=1,\dots,6$, do not overlap -- see Figure 
\ref{folding} and \cite{triangle}. 

\begin{figure}
	\centering\includegraphics[scale=0.6]{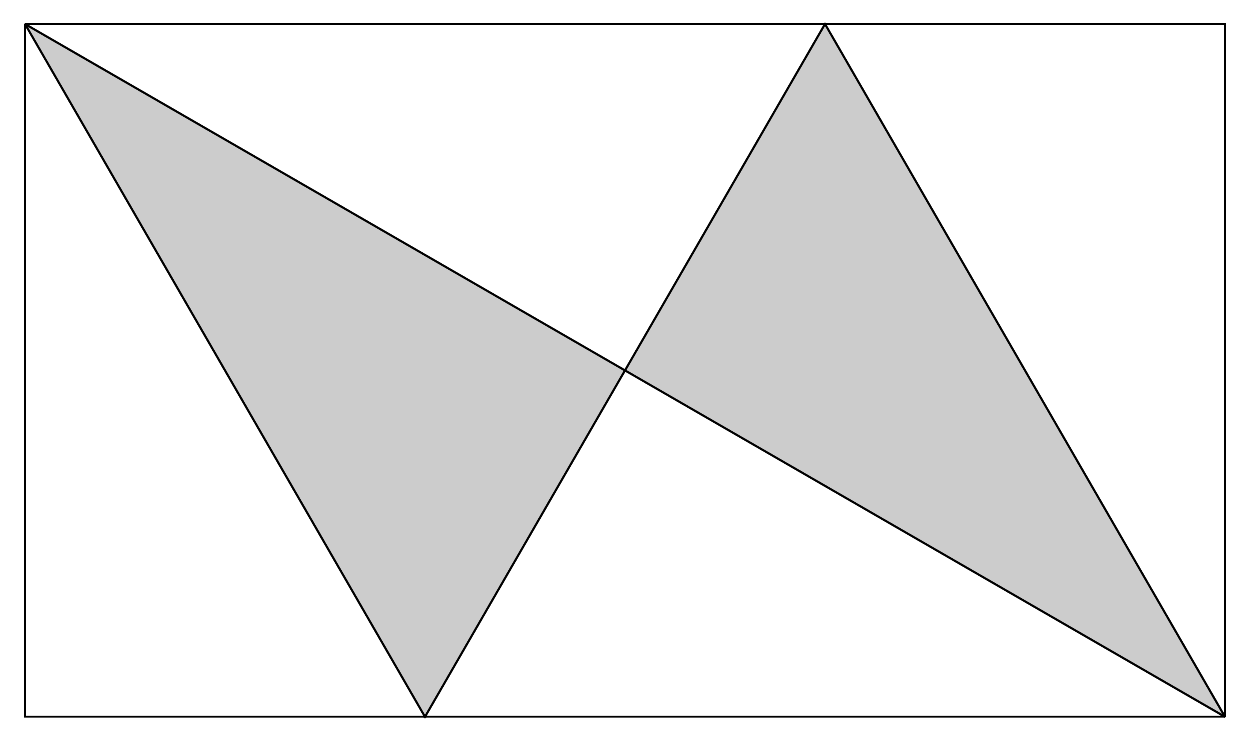}
	\caption{The tiling of $\mathcal R$ with $\mathcal T$, the grey areas 
		correspond to negative $\delta_h$'s.  \label{folding}}\end{figure}
We set 
$$\delta:=(1,-1,1,1,-1,1).$$ 
and, in next result, we prove that $\mathcal T$ admissibly tiles $\mathcal R$. 

\begin{lemma}\label{ladm}
	$(\mathcal T,\{K_h\}_{h=1}^6)$ is an admissible tiling of $\mathcal R$.
\end{lemma}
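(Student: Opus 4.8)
The goal is to produce the sign vector $\delta=(1,-1,1,1,-1,1)$ promised in the statement and to verify \eqref{admdef} for it, i.e.\ that $\mathcal F_\delta\varphi\in H^1_0(\T)$ for every $\varphi\in H^1_0(\R)$. I would first dispose of the interior regularity: since each $K_h$ in \eqref{kdef} is a rigid motion carrying $\T$ into $\R$, the composition $\varphi\circ K_h$ lies in $H^1(\T)$, and therefore so does the finite combination $\mathcal F_\delta\varphi=\frac{1}{36}\sum_{h=1}^6\delta_h\,\varphi\circ K_h$. Hence the only thing to prove is that the trace of $\mathcal F_\delta\varphi$ on $\partial\T$ vanishes. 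Because the trace operator is linear and continuous, and $K_h$ carries an edge $E$ of $\T$ isometrically onto the edge $K_h(E)$ of $K_h(\T)$, this amounts to showing that $\sum_{h=1}^6\delta_h\,\varphi(K_h p)=0$ (in the sense of traces) for a.e.\ $p$ on each of the three edges of $\partial\T$.

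Next I would reduce this to a purely combinatorial statement about the tiling. Fix an edge $E$ of $\T$ and let $\rho_E$ denote the reflection of $\RR^2$ across the line containing $E$, so that $\rho_E$ fixes $E$ pointwise. For $p\in E$ the six points $K_hp$ are then governed by the following dichotomy. If $K_h\rho_E$ coincides with some $K_{h'}$ from the list \eqref{kdef}, then $K_hp=K_{h'}p$ for every $p\in E$, so the tiles $K_h(\T)$ and $K_{h'}(\T)$ share the interior edge $K_h(E)$ and the two corresponding terms collapse to $(\delta_h+\delta_{h'})\varphi(K_hp)$; if instead $K_h\rho_E$ is not among the $K_{h'}$, then $K_h(E)$ has no neighbouring tile, i.e.\ $K_h(E)\subset\partial\R$, and the term $\delta_h\varphi(K_hp)$ vanishes because $\varphi\in H^1_0(\R)$ has zero trace on $\partial\R$. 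Consequently the full sum vanishes for every $\varphi$ precisely when the involution $h\mapsto h'$ induced on the interior edges always pairs indices carrying opposite signs, $\delta_h=-\delta_{h'}$.

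The heart of the argument is therefore to exhibit these index involutions explicitly for the two legs and the hypotenuse of $\T$ and to check them against $\delta=(1,-1,1,1,-1,1)$. Concretely I would compute, from \eqref{kdef}, the three edge-reflections $\rho_{E_1},\rho_{E_2},\rho_{E_3}$ (across the two legs, which lie on $\partial\R$, and across the interior hypotenuse), compose each with $K_1,\dots,K_6$, and read off, tile by tile, whether the product re-enters the list \eqref{kdef} (interior adjacency) or leaves it (a boundary edge, as visible in Figure \ref{folding}). This is exactly where the specific geometry of the $30$-$60$-$90$ tiling enters, and it is the only genuine computation: once the pairing table is assembled, the equalities $\delta_h=-\delta_{h'}$ on every reflection-adjacent pair are immediate and the trace of $\mathcal F_\delta\varphi$ vanishes edge by edge.

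As a structural check that $\delta$ is forced rather than guessed, I would observe that this sign vector two-colours the adjacency graph of the six tiles: the $30$-$60$-$90$ triangle is a fundamental domain for the reflection group generated by $\rho_{E_1},\rho_{E_2},\rho_{E_3}$, so reflection-adjacent tiles necessarily receive opposite parities, and $\delta_h=(-1)^{(\text{parity of tile }h)}$ is exactly $(1,-1,1,1,-1,1)$. The expected obstacle is purely bookkeeping: correctly tracking the six images of each edge under \eqref{kdef}, since a sign slip in any $K_h$ (or in the orientation of $R_\alpha$) would scramble the pairing. The companion inclusion $\mathcal P_\delta H^1_0(\T)\subset H^1_0(\R)$, together with the identity \eqref{fold2}, furnishes a convenient consistency check, as it encodes the same sign compatibility seen from the prolongation side.
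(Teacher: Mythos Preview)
Your proposal is correct and arrives at the same edge-by-edge verification the paper performs: for each side of $\T$, the six images under the $K_h$ either land on $\partial\R$ (where $\varphi$ has zero trace) or coincide in pairs whose $\delta$-signs cancel. The paper simply parametrises each edge as $x_{ij}^\lambda=\lambda v_i+(1-\lambda)v_j$, lists by direct computation which $K_h(x_{ij}^\lambda)$ lie on $\partial\R$ and which coincide pairwise, and reads off the cancellation from $\delta=(1,-1,1,1,-1,1)$; no $H^1$ regularity or trace-theoretic discussion is made explicit. Your route packages the same computation inside a reflection-group framework: the involution $h\mapsto h'$ determined by $K_{h'}=K_h\rho_E$ is precisely the neighbour relation across the shared edge $K_h(E)$, and your observation that $\delta$ is the parity two-colouring of the six tiles explains \emph{why} a compatible sign vector exists (and is unique up to a global sign), whereas the paper only verifies it. This buys a cleaner narrative and an argument that transfers verbatim to any tiling by a fundamental domain of a reflection group, at the modest cost of introducing $\rho_E$; the paper's bare-hands version is shorter but less illuminating.
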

\begin{proof}
	We want to show that if $\varphi\in H_0^1(\mathcal R)$ then $\mathcal F_\delta 
	\varphi\in H_0^1(\mathcal T)$. To this end let $v_0:=(0,0)$, 
	$v_1:=(1/\sqrt{3},0)$ and $v_2:=(0,1)$ be the vertices of $\mathcal T$ and 
	define 
	$$x_{ij}^\lambda:=\lambda v_i +(1-\lambda) v_{j}.$$
	so that $\partial \mathcal T= \{x_{ij}^\lambda\mid
	\lambda\in[0,1], 0\leq i<j\leq2\}$. 
	By a direct computation, for all $\lambda\in[0,1]$ 
	$$K_1(x_{01}^\lambda), K_6(x_{01}^\lambda)\in \partial \mathcal R,$$
	$$K_2(x_{01}^\lambda)=K_4(x_{01}^\lambda),$$
	and
	$$K_3(x_{02}^\lambda)=K_5(x_{02}^\lambda).$$
	Since $\varphi\in H_0^1(\mathcal R)$ then $\mathcal F_\delta 
	\varphi(x_{01}^\lambda)=0$.
	Similarly, for all $\lambda\in[0,1]$
	$$K_1(x_{02}^\lambda), K_6(x_{02}^\lambda)\in \partial \mathcal R,$$
	$$K_2(x_{02}^\lambda)=K_3(x_{02}^\lambda),$$
	and
	$$K_4(x_{02}^\lambda)=K_5(x_{02}^\lambda)$$
	therefore $\mathcal F_\delta \varphi(x_{02}^\lambda)=0$ for all 
	$\lambda\in[0,1]$.
	Finally for all $\lambda\in[0,1]$ 
	$$K_3(x_{12}^\lambda), K_4(x_{12}^\lambda)\in \partial \mathcal R,$$
	$$K_1(x_{12}^\lambda)=K_2(x_{12}^\lambda),$$
	and
	$$K_5(x_{12}^\lambda)=K_6(x_{12}^\lambda)$$
	therefore we get also in this case $\mathcal F_\delta 
	\varphi(x_{12}^\lambda)=0$ for all $\lambda\in[0,1]$ and we may conclude that 
	$\mathcal F_\delta \varphi\in H_0^1(\mathcal T)$. 
\end{proof}
\begin{remark}
	Lemma \ref{ladm} was remarked in \cite[p.312]{triangle}, but to the 
	best of our knowledge, this is the first time an explicit proof is provided. 
\end{remark}

Now, consider the eigenfunctions of $-\Delta$ in $H_0^1(\mathcal R)$:
$$\overline e_k(x_1,x_2):=\sin(\pi k_1 \frac{x_1}{\sqrt{3}})\sin(\pi k_2 x_2), 
\quad k=(k_1,k_2)\in\NN^2.$$
We finally define for every $k\in \NN^2$
\begin{equation}\label{def}
e_k(x):=N^2 \mathcal F_\delta \overline e_k= \sum_{h=1}^6 \delta_h \overline 
e_k(K_h x). 
\end{equation}

Next result, proved in \cite{triangle}, states that Assumption \ref{A2} is 
satisfied by $\{e_k\}$.
\begin{lemma}
	The set of functions $\{e_k\}$ defined in \eqref{def} is a complete 
	orthogonal 
	base for $\mathcal T$ 
	formed by the eigenfunction of $-\Delta$ in $H_0^1(\mathcal T)$.   
	Furthermore $\mathcal P_\delta e_k(x)=e_k(x)$.
\end{lemma}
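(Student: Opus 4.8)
The statement is exactly the verification of Assumption \ref{A2} for the concrete data $\Omega=\T$, $\Omega'=\R$, with $N=6$ and $\delta=(1,-1,1,1,-1,1)$; it has three independent contents: that each $e_k$ is a Dirichlet eigenfunction on $\T$, that $\mathcal P_\delta e_k=e_k$ (the form \eqref{opdef} of Assumption \ref{A2}), and that $\{e_k\}$ is a complete orthogonal system in $L^2(\T)$. The plan is to treat these in increasing order of difficulty, reserving the symmetry identity and the degenerate part of orthogonality as the crux.

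First I would dispose of the eigenfunction statement. Since $\overline e_k\in H_0^1(\R)$ and $e_k=N^2\mathcal F_\delta\overline e_k$, admissibility (Lemma \ref{ladm}) gives directly $e_k\in H_0^1(\T)$. Writing $e_k=\sum_{h=1}^N\delta_h\,\overline e_k\circ K_h$ as in \eqref{def} and using that the Laplacian commutes with the rigid motions $K_h$ of \eqref{kdef}, each summand $\overline e_k\circ K_h$ is an eigenfunction of $-\Delta$ with the common eigenvalue $\gamma_k=k_1^2/3+k_2^2$; hence $-\Delta e_k=\gamma_k e_k$ weakly, and combined with the boundary membership this makes $e_k$ a Dirichlet eigenfunction of $-\Delta$ on $\T$.

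The heart of the proof is the prolongation identity $\mathcal P_\delta e_k=e_k$. Extending $e_k$ to $\RR^2$ by $\tilde e_k:=\sum_{h=1}^N\delta_h\,\overline e_k\circ K_h$, this amounts to $\tilde e_k(K_j x)=\delta_j\tilde e_k(x)$ for every $j$ and $x\in\T$, i.e. to $\sum_h\delta_h\,\overline e_k(K_hK_j x)=\delta_j\sum_h\delta_h\,\overline e_k(K_h x)$, an identity that must hold for \emph{all} $k$. I would prove it by a reindexing argument: left composition with $K_j$ permutes the six tiles, so there should be a permutation $\pi_j$ of $\{1,\dots,N\}$ for which $K_{\pi_j(h)}^{-1}K_hK_j$ lies in the symmetry group $\mathcal G$ of the Dirichlet problem on $\R$ — the group generated by the reflections in the four sides of $\R$ and the translations by $(2\sqrt3,0)$ and $(0,2)$ — under which $\overline e_k\circ S=\chi(S)\,\overline e_k$ with a sign $\chi(S)\in\{-1,1\}$ \emph{independent of $k$} (this is where one uses that $\sin(\pi k_i\,\cdot)$ is odd and antiperiodic, so the relevant signs do not depend on $k_1,k_2$). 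Substituting $\overline e_k\circ K_hK_j=\chi(S)\,\overline e_k\circ K_{\pi_j(h)}$ and relabelling, the identity collapses to the sign consistency $\delta_h\,\chi(S_{h,j})=\delta_j\,\delta_{\pi_j(h)}$ for all $h,j$. Verifying this is the main obstacle: it is a finite but delicate check over the compositions of the explicit maps \eqref{kdef}, entirely in the spirit of the side-matching carried out in Lemma \ref{ladm}, and it is precisely here that the specific choice $\delta=(1,-1,1,1,-1,1)$ is forced (and where one must confirm that each residual isometry genuinely lands in $\mathcal G$ rather than in a $\pi/3$-rotated coset).

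Finally I would settle completeness and orthogonality. For completeness I would use \eqref{fold2}: given $u\in L^2(\T)$, expand $\mathcal P_\delta u\in L^2(\R)$ in the base $\{\overline e_k\}$ and apply the bounded operator $\mathcal F_\delta$ termwise; since $\mathcal F_\delta\overline e_k=N^{-2}e_k$ while $\mathcal F_\delta\mathcal P_\delta u=N^{-1}u$, this exhibits $u=\sum_k N^{-1}\beta_k e_k$ as a convergent series, so $\{e_k\}$ spans $L^2(\T)$. For orthogonality, once $\mathcal P_\delta e_l=e_l$ is known, the inclusion $\mathcal P_\delta H_0^1(\T)\subset H_0^1(\R)$ noted before Assumption \ref{A1} makes each $e_k$ a genuine Dirichlet eigenfunction on $\R$ as well, and a change of variables across the tiles gives $\langle e_k,e_l\rangle_{L^2(\T)}=\tfrac1N\langle e_k,e_l\rangle_{L^2(\R)}$, whose right-hand side vanishes by eigenvalue orthogonality on $\R$ whenever $\gamma_k\neq\gamma_l$. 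The remaining subtlety is the degenerate case $\gamma_k=\gamma_l$ with $k\neq l$: here I would first restrict the index to a fundamental domain of $\NN^2$ for the action induced by $\mathcal G$ (so that the surviving $e_k$ are distinct and non-zero, many $\overline e_k$ folding to proportional or vanishing functions) and then check orthogonality of the explicit sine-product combinations inside each finite-dimensional eigenspace. I expect this finite computation, together with the sign bookkeeping of the previous paragraph, to be the technical core of the argument.
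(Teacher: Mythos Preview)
The paper does not prove this lemma at all: it is stated with the attribution ``proved in \cite{triangle}'' and no argument is given. Your proposal therefore cannot be compared against a proof in the paper, but it can be assessed on its own merits, and its architecture is sound: admissibility (Lemma~\ref{ladm}) yields $e_k\in H_0^1(\T)$; invariance of $\Delta$ under the rigid motions $K_h$ gives the eigenvalue equation; folding the $\overline e_k$-expansion of $\mathcal P_\delta u$ via the bounded operator $\mathcal F_\delta$ gives completeness; and the change-of-variables identity $\langle e_k,e_l\rangle_{L^2(\T)}=\tfrac1N\langle e_k,e_l\rangle_{L^2(\R)}$ reduces orthogonality to a question on $\R$.

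Two points deserve tightening. First, in the reindexing step the correct condition is $K_hK_jK_{\pi_j(h)}^{-1}\in\mathcal G$, not $K_{\pi_j(h)}^{-1}K_hK_j\in\mathcal G$ as you wrote: with your ordering one gets $K_hK_j=K_{\pi_j(h)}S$, and then $\overline e_k\!\circ K_hK_j=(\overline e_k\!\circ K_{\pi_j(h)})\circ S$, which is not $\chi(S)\,\overline e_k\!\circ K_{\pi_j(h)}$ unless $K_{\pi_j(h)}$ normalises $\mathcal G$ and preserves $\chi$---an extra hypothesis you have not justified. Second, for the degenerate-eigenvalue orthogonality there is a cleaner route than the eigenspace-by-eigenspace check you propose: once each nonzero $e_k$ is written explicitly as a finite combination of $\overline e_l$'s (this is what the folding formula \eqref{def} produces after expanding the sines of rotated arguments), the supporting index sets are pairwise disjoint as $k$ runs over a fundamental domain, so orthogonality in $L^2(\R)$, and hence in $L^2(\T)$, follows directly from the orthogonality of the $\overline e_l$. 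That is also how the redundancy and vanishing of certain $e_k$ are detected, confirming your remark that the index set must be pruned.
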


\begin{remark}
	For each $k\in\NN^2$, the eigenfunctions $e_k$ and $\bar e_k$ share the same 
	eigenvalue $\gamma_k=\pi^2(\frac{k_1^2}{3}+k_2^2)$, see \cite{triangle}.
\end{remark}
Next gives access to classical results on observability of rectangular 
membranes 
for the study of triangular domains. 
\begin{theorem}\label{thmmain}
	Let $u$ be the solution of \eqref{wave} 
	with $u_0,u_1\in L^2(\Omega)\times H^{-1}(\Omega)$ and let $\overline u$ be 
	the solution of
	\begin{equation*}
	\begin{cases}
	u_{tt}-\Delta u=0& \text{on }\RR\times \R\\
	u=0&\text{in }\RR\times \partial \R\\
	u(t,0)=\mathcal P_\delta u_0,~u_t(t,0)=\mathcal P_\delta u_1&\text{in 
	}\R.
	\end{cases}
	\end{equation*}
	Also let $\R_0\subset \R$ and define 
	$$\T_0:=\bigcup_{h=1}^N K_h^{-1}(\T_0) \cap \Omega.$$
	Then for every $T>0$ and for every couple $c=(c_1,c_2)$ of positive constants, 
	the inequalities
	\begin{equation}\label{c1r}
	\|u_0\|_{L^2(\T)}^2+\|u_1\|^2_{H^{-1}(\T)}\asymp_c \int_0^T\int_{\T_0} |u(t,x)|^2 dxdt. 
	\end{equation}
	hold if and only if 
	\begin{equation}\label{c2r}
	\|\mathcal P_\delta u_0\|_{L^2(\R)}^2+\|\mathcal P_\delta u_1\|^2_{H^{-1}(\R)}\asymp_c \int_0^T\int_{\R_0} |u(t,x)|^2 dxdt. 
	\end{equation}
\end{theorem}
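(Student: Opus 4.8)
The plan is to recognize Theorem \ref{thmmain} as the specialization of the abstract Theorem \ref{thmprol} to the concrete pair $\Omega = \T$ and $\Omega' = \R$, with $N = 6$ and $\delta = (1,-1,1,1,-1,1)$. Once Assumption \ref{A1} and Assumption \ref{A2} are verified for this data, the statement follows verbatim from Theorem \ref{thmprol}, so the entire task reduces to checking that the geometric ingredients introduced in this section fit the abstract hypotheses.

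First I would confirm Assumption \ref{A1}. The transformations $K_1,\dots,K_6$ of \eqref{kdef} form a tiling of $\R$ by $\T$ in the sense of Definition \ref{deftiling}: the covering identity \eqref{a1triangle} holds and the images $K_h(\T)$ are pairwise non-overlapping. Admissibility—namely that $\mathcal F_\delta \varphi \in H_0^1(\T)$ for every $\varphi \in H_0^1(\R)$ with the chosen $\delta$—is exactly the content of Lemma \ref{ladm}, whose proof verifies that on each of the three edges of $\partial \T$ the six terms defining $\mathcal F_\delta \varphi$ either map into $\partial \R$ (and hence vanish) or cancel in pairs because of the sign pattern prescribed by $\delta$.

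Next I would confirm Assumption \ref{A2} for the base $\{e_k\}$ defined in \eqref{def}. The Lemma immediately preceding the theorem (established in \cite{triangle}) asserts precisely that $\{e_k\}$ is a complete orthogonal base of $L^2(\T)$ consisting of eigenfunctions of $-\Delta$ in $H_0^1(\T)$, that each $e_k$ is defined on all of $\T \cup \R$ through formula \eqref{def}, and that $\mathcal P_\delta e_k = e_k$. These are exactly the three clauses of Assumption \ref{A2}, so no further argument is required; the associated eigenvalues are the $\gamma_k = \pi^2(k_1^2/3 + k_2^2)$ shared with $\overline e_k$.

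With both assumptions in force, I would invoke Theorem \ref{thmprol} directly with $N = 6$: setting $\Omega_0 = \T_0$ and $\Omega_0' = \R_0$, its conclusion is precisely the equivalence of \eqref{c1r} and \eqref{c2r} for every $T > 0$ and every couple $c = (c_1,c_2)$. I do not expect any genuine obstacle, since the analytic heavy lifting—the prolongation–folding correspondence between solutions in Lemma \ref{lprol} and the resulting identity relating observation integrals on $\R_0$ and $\T_0$ to the energy norms in Lemma \ref{lcr}—has already been carried out in full generality. The only point requiring care is purely bookkeeping: verifying that the definition $\T_0 := \bigcup_{h=1}^6 K_h^{-1}(\R_0) \cap \T$ coincides with the set $\Omega_0 = \bigcup_{h=1}^N K_h^{-1}(\Omega_0') \cap \Omega$ appearing in Theorem \ref{thmprol}, so that the observation regions on the two domains match under the substitutions $\Omega = \T$, $\Omega' = \R$, $\Omega_0' = \R_0$.
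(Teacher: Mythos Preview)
Your proposal is correct and matches the paper's own proof essentially verbatim: the paper simply observes that $\mathcal T$, $\mathcal R$ and $\{e_k\}$ satisfy Assumption~\ref{A1} (via Lemma~\ref{ladm}) and Assumption~\ref{A2} (via the lemma from \cite{triangle}), and then invokes Theorem~\ref{thmprol} with $\Omega=\mathcal T$ and $\Omega'=\mathcal R$. Your write-up spells out the same steps in somewhat more detail, but the approach is identical.
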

\begin{proof}
	Since $\mathcal T,\mathcal R$ and $\{e_k\}$ satisfy Assumption \ref{A1} and 
	Assumption \ref{A2}, then the claim follows by a direct application of Theorem 
	\ref{thmprol} with $\Omega=\mathcal T$ and $\Omega'=\mathcal R$.
\end{proof}

We conclude this section by showing that Theorem \ref{thmmainintro} is a direct 
consequence of Theorem \ref{thmmain}:
\begin{proof}[Proof of Theorem \ref{thmmainintro}] 
	By Lemma \ref{lcr}, if $(u_0,u_1)\in L^2(\mathcal T)\times H^{-1}(\mathcal T)$ 
	then 
	$(\mathcal P_\delta u_0,\mathcal P_\delta u_1)\in L^2(\mathcal R)\times 
	H^{-1}(\mathcal R)$. The claim hence follows by Theorem \ref{thmmain}. 
\end{proof}

\section{Acknowledgements}
The author is grateful to Vilmos Komornik (Universit\'e de Strasbourg) and Paola Loreti (Sapienza Universit\`{a} di Roma) for the fruitful discussions on the topic of internal observability of wave equations and related symmetry properties of the initial data: they provided insight and expertise that greatly assisted the research.

\end{document}